\documentclass[review]{elsarticle}

\usepackage{lineno,hyperref}
\usepackage{mathrsfs}
\usepackage{amsfonts}
\usepackage{amssymb}
\usepackage{amsmath}
\usepackage{hyperref}
\usepackage{amsthm}
\usepackage{cases}
\usepackage{arydshln}
\usepackage[usenames,dvipsnames]{color}
\usepackage{graphicx}%
\newtheorem{thm}{Theorem}[section]

\newtheorem{lemma}[thm]{Lemma}

\newtheorem{proposition}[thm]{Proposition}

\newtheorem{conj}[thm]{Conjecture}

\modulolinenumbers[5]


\begin{document}

\begin{frontmatter}

\title{On Drury's solution of Bhatia \& Kittaneh's question\tnoteref{mytitlenote}}
\tnotetext[mytitlenote]{Dedicated to Rajendra Bhatia on the occassion of his 65th birthday.}

\author{Minghua Lin\fnref{myfootnote}}
\address{Department of Mathematics, Shanghai University, Shanghai, 200444, China}
\fntext[myfootnote]{Email: mlin87@ymail.com}

\begin{abstract} Let $A, B$ be $n\times n$ positive semidefinite matrices. Bhatia and Kittaneh  asked whether it is true $$ \sqrt{\sigma_j(AB)}\le \frac{1}{2} \lambda_j(A+B),   \qquad  j=1, \ldots, n$$ 
 where $\sigma_j(\cdot)$, $\lambda_j(\cdot)$, are the $j$-th largest singular value, eigenvalue, respectively. The question was recently solved by Drury in the affirmative.  This article revisits Drury's solution. In particular, we simplify the proof for a key auxiliary result  in his solution.  
 
\end{abstract}

\begin{keyword} AM-GM inequality, singular value, eigenvalue.
  \MSC[2010] 15A45, 15A60
\end{keyword}

\end{frontmatter}


\section{Introduction}
 Bhatia has made many fundamental contributions to Matrix Analysis  \cite{Bha97}. One of his favorite topics is matrix inequalities.  Roughly speaking, matrix inequalities are noncommutative versions of the corresponding scalar inequalities. To get a glimpse of this topic, let us start with a simple example. The simplest AM-GM inequality says that 
 
 $$a, b>0 \implies \frac{a+b}{2}\ge \sqrt{ab}.$$ 
 
Now it is known that \cite[p. 107]{Bha07} its most ``direct" noncommutative version is 
\begin{eqnarray}\label{am-gm}
 A, B ~ \hbox{are $n\times n$ positive definite matrices} \implies \frac{A+B}{2}\ge A\sharp B,
\end{eqnarray} 
where $A\sharp B:=A^{\frac{1}{2}}(A^{-\frac{1}{2}}BA^{-\frac{1}{2}})^{\frac{1}{2}}A^{\frac{1}{2}}$ is called the geometric mean of $A$ and $B$. For two Hermitian matrices $A$ and $B$ of the same size, in this article, we write $A\ge B$ (or $B\le A$) to mean that $A-B$ is positive semidefinite. 

If we denote $S:=A\sharp B$, then $B=SA^{-1}S$. Thus a variant of (\ref{am-gm}) is the following
\begin{eqnarray}\label{am-gm1}
A, S ~ \hbox{are $n\times n$ positive definite matrices} \implies  A+SA^{-1}S\ge 2S.
\end{eqnarray}

There is a long tradition in matrix analysis of comparing eigenvalues or singular values. To proceed, let us fix some notation. The $j$-th largest singular value of  a complex matrix  $A$ is denoted by $\sigma_j(A)$. If all the eigenvalues of $A$ are real, then we denote its $j$-th largest one by $\lambda_j(A)$. By  Weyl's Monotonicity Theorem  \cite[p. 63]{Bha97},  (\ref{am-gm}) readily implies 
\begin{eqnarray*}   \lambda_j(A+B)\ge  2 \lambda_j(A\sharp B), \qquad  j=1, \ldots, n. 
\end{eqnarray*}

As far as the eigenvalues or singular values are considered, there are other versions of ``geometric mean".  Bhatia and Kittaneh studied this kind of inequalities over a twenty year period \cite{BK90, BK00, BK08}. Their elegant results include the following: If $A, B$ are $n\times n$ positive semidefinite matrices, then 
\begin{eqnarray}\label{bk1} && \lambda_j(A+B)\ge  2\sqrt{\lambda_j(AB)}=2\sigma_j(A^{\frac{1}{2}}B^{\frac{1}{2}}); \\&&
	\label{bk2}  \lambda_j(A+B)\ge   2\lambda_j(A^{\frac{1}{2}}B^{\frac{1}{2}}) 
\end{eqnarray} for  $j=1, \ldots, n$.

To complete the picture in (\ref{bk1})-(\ref{bk2}), they asked whether it is true 
\begin{eqnarray*}\lambda_j(A+B)\ge  2\sqrt{\sigma_j(AB)}, \qquad  j=1, \ldots, n?
\end{eqnarray*}  

This question was recently answered in the affirmative by Drury in his very brilliant work \cite{Dru12}. The purpose of this expository article is to revisit Drury's solution.   Hopefully, some of our arguments would shed new insights into the beautiful result, which is now a theorm. 

\begin{thm}\cite{Dru12}  If $A, B$ are $n\times n$ positive definite semidefinite matrices, then 
\begin{eqnarray}
 \label{bkd}   	\lambda_j(A+B)\ge  2\sqrt{\sigma_j(AB)}, \qquad  j=1, \ldots, n.
\end{eqnarray} 
  \end{thm}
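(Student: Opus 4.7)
The plan is to reduce, via a standard continuity argument (replace $A,B$ by $A+\varepsilon I,\,B+\varepsilon I$ and let $\varepsilon\to 0^{+}$), to the case where both $A$ and $B$ are positive definite. Squaring the target (\ref{bkd}) and using that squaring preserves the order of nonnegative numbers, the claim becomes
$$
\lambda_j\!\left(\left(\tfrac{A+B}{2}\right)^{\!2}\right)\;\ge\;\sigma_j(AB),\qquad j=1,\ldots,n,
$$
where one may identify $\sigma_j(AB)=\lambda_j\bigl((BA^{2}B)^{1/2}\bigr)$.

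Two tempting shortcuts should be dismissed at the outset. A single matrix-order inequality $\bigl(\tfrac{A+B}{2}\bigr)^{2}\ge (BA^{2}B)^{1/2}$ would settle matters by Weyl's Monotonicity Theorem, but it already fails at the $2\times 2$ level: for $A=\mathrm{diag}(1,0)$ and $B=\bigl(\begin{smallmatrix}1&1\\1&1\end{smallmatrix}\bigr)$ the $(2,2)$ entry of the difference equals $\tfrac12-\tfrac{1}{\sqrt 2}<0$. On the other hand, merely squaring (\ref{bk1}) yields only $\lambda_j\!\bigl(\tfrac{A+B}{2}\bigr)^{2}\ge \lambda_j(AB)$; since $\sigma_j(AB)\ge\lambda_j(AB)$ in general, this is strictly weaker than what is required.

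I would therefore invoke the Courant--Fischer min-max principle and aim to exhibit, for each $j$, a $j$-dimensional subspace $V$ on which $x^{*}(A+B)x\ge 2\sqrt{\sigma_j(AB)}$ for every unit $x\in V$. The natural candidate is $V=\mathrm{span}\{v_1,\ldots,v_j\}$, where $v_1,\ldots,v_j$ are the top right singular vectors of $AB$; on this $V$ one at least has $\|ABx\|\ge\sigma_j(AB)$ for every unit $x$. One would then want to close by a pointwise bound $(x^{*}(A+B)x)^{2}\ge 4\|ABx\|$; but this bound fails on generic unit vectors (take $x=(0,1)^{\!T}$ in the example above to get $(x^{*}(A+B)x)^{2}=1<4=4\|ABx\|$), and even the crude AM--GM split $x^{*}(A+B)x\ge 2\sqrt{(x^{*}Ax)(x^{*}Bx)}$ is not tight enough on vectors in $V$ either.

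The main obstacle is thus to supply a sharper substitute that uses, in an essential way, the structural fact that $x$ lies in the top spectral subspace of $|AB|=(BA^{2}B)^{1/2}$ rather than merely the scalar information $\|ABx\|\ge\sigma_j(AB)$. I expect this to be the content of the ``key auxiliary result'' whose proof this article promises to simplify; a plausible mechanism is a $2n\times 2n$ block positivity
$$
\begin{pmatrix} A & Z\\ Z^{*} & B\end{pmatrix}\ge 0
$$
for a matrix $Z$ fashioned from the polar decomposition of $AB$, which, via a Schur-complement argument, would translate the required dimensional comparison---between the spectral subspace of $A+B$ above $2\sqrt t$ and that of $|AB|$ above $t$---into a tractable positivity check.
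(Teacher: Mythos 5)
Your preliminary steps are sound: the perturbation to positive definite $A,B$, the reformulation $\lambda_j\bigl((\tfrac{A+B}{2})^2\bigr)\ge\lambda_j\bigl((BA^2B)^{1/2}\bigr)$, and both counterexamples are correct and well chosen. (A small aside: $\sigma_j(AB)\ge\lambda_j(AB)$ is only guaranteed for $j=1$; for intermediate $j$ the two sequences are in general incomparable, as $\prod_j\sigma_j=\prod_j\lambda_j$ forces some $\sigma_j\le\lambda_j$ whenever $\sigma_1>\lambda_1$. Your conclusion that squaring (\ref{bk1}) does not yield (\ref{bkd}) is nevertheless right.) The problem is that the proof then stops exactly where the work begins. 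You yourself flag the gap: the Courant--Fischer attack on $V=\mathrm{span}\{v_1,\dots,v_j\}$ is abandoned after the pointwise bound $(x^*(A+B)x)^2\ge 4\|ABx\|$ is seen to fail, and the replacement you gesture at --- a $2n\times 2n$ positivity $\begin{pmatrix}A&Z\\ Z^*&B\end{pmatrix}\ge 0$ closed by a Schur complement --- is not the mechanism that actually makes the argument go (note in particular that Drury's auxiliary matrix $R$ is \emph{not} positive semidefinite, so a positivity/Schur-complement scheme is the wrong template).

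What the paper (following Drury) actually does is a monotonicity reduction on \emph{both} matrices followed by a compression, with no min-max over a chosen subspace. Normalize $\sigma_r(AB)=1$, i.e.\ $\lambda_r(AB^2A)=1$, and replace $B$ by the smaller, rank-$r$ matrix $B_1=\bigl(A^{-1}(\sum_{k=1}^rP_k)A^{-1}\bigr)^{1/2}\le B$ built from the top $r$ spectral projections of $AB^2A$; by Weyl it suffices to show $\lambda_r(A+B_1)\ge 2$. Writing things in block form along $\mathrm{ran}\,B_1\oplus\ker B_1$, the normalization becomes $X(A_{11}^2+A_{12}A_{12}^*)X=I_r$, and one may further shrink $A$ to $A_1\ge 0$ with vanishing Schur complement, reducing everything to Proposition~\ref{p1}. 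That proposition is then proved \emph{not} by a Schur-complement positivity but by factoring the block matrix as $GG^*$, passing to the unitarily similar $G^*G=\begin{pmatrix}SX^{-1}S^*&(S^{-1})^*\\ S^{-1}&X\end{pmatrix}$ with $S=A_{11}^{-1/2}X^{-1/2}$, compressing by the partial isometry $\tfrac{1}{\sqrt2}\begin{pmatrix}I_r\\ I_r\end{pmatrix}$, and finishing with the operator AM--GM variant $X+|S|X^{-1}|S|\ge 2|S|$ (Lemma~\ref{lem1}). Filling your gap essentially requires discovering this reduction-plus-compression scheme; it is the substantive content of the theorem, and your proposal, while self-aware about the missing step, does not supply it.
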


 \section{Drury's reduction in proving (\ref{bkd})}
 Our presentation here is just slightly different from that in \cite{Dru12}.  
 
 Assume without loss of generality that $A, B$ are positive definite (the general case is by a standard purturbation argument). Fix $r$ in the range $1\le r\le n$ and normalize so that 
 $\sigma_r(AB)=1$. Our goal is to  show that  $\lambda_r(A+B)\ge 2$. 
 
 Note that $\sigma_r(AB)=1$ is the same as $\lambda_r(AB^2A)=1$. Consider the spectral decomposition $$AB^2A=\sum_{k=1}^n\lambda_k(AB^2A)P_k,$$ where $P_1, P_2, \ldots, P_n$, are orthogonal projections. Then
 $\lambda_k(AB^2A)\ge 1$ for $k=1, \ldots, r$. Define a positive semidefinite $$B_1:=\left(A^{-1}\left(\sum_{k=1}^r P_k\right)A^{-1}\right)^{1/2}.$$ It is easy to see (indeed, from $B^2\ge B_1^2$) that 
 $$B=\left(A^{-1}\left(\sum_{k=1}^r\lambda_k(AB^2A) P_k\right)A^{-1}\right)^{1/2}\ge B_1.$$ So we are done if we can show  \begin{eqnarray}\label{reduction1}
  \lambda_r(A+B_1)\ge 2. 
 \end{eqnarray}
 
 As $B_1$ has rank $r$, split the underlying space as the direct sum of image and kernel of $B_1$, we may partition comformally $B_1$ and $A$ in the following form
 $$B_1=\begin{pmatrix}
 X & 0 \\0& 0
 \end{pmatrix}, \  A=\begin{pmatrix}
 A_{11} & A_{12}\\ A_{12}^*&  A_{22}
 \end{pmatrix}.$$
 
 Note $AB_1^2A$ is an orthogonal projection of rank $r$, the same is true for $B_1A^2B_1$. Therefore, $$B_1A^2B_1=\begin{pmatrix}
 X(A_{11}^2+A_{12}A_{12}^*)X & 0\\ 0& 0
 \end{pmatrix}\implies X(A_{11}^2+A_{12}A_{12}^*)X=I_r$$
 where $I_r$ is the $r\times r$ identity matrix. 
 
 Finally, observe that $$A\ge A_1:=\begin{pmatrix}
 	A_{11} & A_{12}\\ A_{12}^*&   	A_{12}^*A_{11}^{-1}A_{12}
 \end{pmatrix}.$$
Therefore,  (\ref{reduction1}) would follow from 
  \begin{eqnarray}\label{reduction2}
  \lambda_r(A_1+B_1)\ge 2. 
  \end{eqnarray}
  
Thus, the remaining effort is made to show   (\ref{reduction2}), which we formulate as a proposition. 
\begin{proposition}\label{p1}
Let $A_{11}$ and $X$ be $r\times r$ positive definite matrices and $A_{12}$ is an $(n-r)\times (n-r)$ matrix such that $X(A_{11}^2+A_{12}A_{12}^*)X=I_r$. Then \begin{eqnarray}\label{reduction3}
 \lambda_r\begin{pmatrix}
 A_{11}+X & A_{12}\\ A_{12}^*&  A_{12}^*A_{11}^{-1}A_{12}  
 \end{pmatrix}\ge 2.
\end{eqnarray} 
\end{proposition}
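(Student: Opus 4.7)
The plan is to pass from $M = \begin{pmatrix} A_{11}+X & A_{12}\\ A_{12}^* & A_{12}^*A_{11}^{-1}A_{12} \end{pmatrix}$ to a smaller $2r \times 2r$ matrix $\widehat{M}$ that shares the same nonzero spectrum, and then exhibit an explicit $r$-dimensional subspace on which $\widehat{M} \ge 2 I_{2r}$. By the Courant--Fischer minimax principle this yields $\lambda_r(\widehat{M})\ge 2$, and hence $\lambda_r(M)\ge 2$.

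First, I would observe that $M = CC^*$ where $C = \begin{pmatrix} V & U \end{pmatrix}$ is the $n \times 2r$ matrix whose block columns are
$$V = \begin{pmatrix} A_{11}^{1/2} \\ A_{12}^* A_{11}^{-1/2} \end{pmatrix}, \qquad U = \begin{pmatrix} X^{1/2} \\ 0 \end{pmatrix};$$
indeed $VV^*$ reproduces the rank-$r$ block $A_1$ from the reduction and $UU^*$ reproduces $B_1$. Since $CC^*$ and $C^*C$ share all nonzero eigenvalues (with multiplicity) and $M$ has rank at least $r$ (because $VV^*$ does), one has $\lambda_r(M)=\lambda_r(\widehat{M})$, where $\widehat{M} := C^*C$. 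The hypothesis $A_{11}^2 + A_{12}A_{12}^* = X^{-2}$ simplifies $V^*V$ to $A_{11}^{-1/2}X^{-2}A_{11}^{-1/2}$, yielding
$$\widehat{M} \;=\; \begin{pmatrix} A_{11}^{-1/2}X^{-2}A_{11}^{-1/2} & A_{11}^{1/2}X^{1/2} \\ X^{1/2} A_{11}^{1/2} & X \end{pmatrix}.$$

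The main step is to produce an $r$-dimensional subspace of $\mathbb{C}^{2r}$ on which $\widehat{M} - 2 I_{2r}$ is positive semidefinite. My candidate is the column space of the $2r\times r$ matrix
$$\widehat{V} \;=\; \begin{pmatrix} A_{11}^{1/2} \\ X^{-1/2} \end{pmatrix},$$
which has rank $r$ because $A_{11}^{1/2}$ is invertible. A direct block multiplication then gives
$$\widehat{V}^{\,*}\widehat{M}\,\widehat{V} \;=\; X^{-2} + 2 A_{11} + I_r, \qquad \widehat{V}^{\,*}\widehat{V} \;=\; A_{11} + X^{-1},$$
so that
$$\widehat{V}^{\,*}(\widehat{M} - 2 I_{2r})\,\widehat{V} \;=\; X^{-2} - 2X^{-1} + I_r \;=\; (X^{-1} - I_r)^2 \;\ge\; 0,$$
the last inequality being the trivial scalar AM--GM $X^{-2} + I_r \ge 2 X^{-1}$ applied to the commuting matrices $X^{-2}$ and $I_r$.

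The main obstacle is locating the correct weighting inside $\widehat{V}$: the weights $A_{11}^{1/2}$ and $X^{-1/2}$ are chosen precisely so that the cross terms $V^*U$ and $U^*V$ contribute a telescoping $2A_{11}$ to $\widehat{V}^{\,*}\widehat{M}\,\widehat{V}$, and the diagonal contributions assemble into the clean square $(X^{-1}-I_r)^2$ once $2\widehat{V}^{\,*}\widehat{V}$ is subtracted. Less symmetric choices of subspace, such as the column spaces of $\begin{pmatrix} I_r \\ 0\end{pmatrix}$ or $\begin{pmatrix} A_{11} \\ A_{12}^*\end{pmatrix}$, leave a residual indefinite remainder (equivalent to a hard nonlinear inequality in $A_{11}$ and $X$) that appears difficult to control directly, so the art of the argument lies almost entirely in this choice.
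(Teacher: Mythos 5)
Your proof is correct, and it shares the paper's first move exactly: both you and the author factor $M=CC^*$ with $C=\begin{pmatrix} A_{11}^{1/2}&X^{1/2}\\ A_{12}^*A_{11}^{-1/2}&0\end{pmatrix}$, pass to $\widehat{M}=C^*C$, and use $A_{11}^2+A_{12}A_{12}^*=X^{-2}$ to obtain $\widehat{M}=\begin{pmatrix} A_{11}^{-1/2}X^{-2}A_{11}^{-1/2}& A_{11}^{1/2}X^{1/2}\\ X^{1/2}A_{11}^{1/2}&X\end{pmatrix}$. Where you diverge is in how $\lambda_r(\widehat{M})\ge 2$ is established. The paper sets $S=A_{11}^{-1/2}X^{-1/2}$ and appeals to a separate Lemma (a slight generalization of Drury's Proposition 2): that lemma is proved by the polar decomposition $S=U|S|$, a unitary conjugation, a compression by the universal partial isometry $\frac{1}{\sqrt 2}\begin{pmatrix}I_r\\I_r\end{pmatrix}$, and then the genuinely noncommutative AM--GM variant $X+|S|X^{-1}|S|\ge 2|S|$ followed by $|S|+|S|^{-1}\ge 2I$. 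You instead compress $\widehat{M}$ directly along the nonuniversal, problem-adapted subspace $\operatorname{Col}\begin{pmatrix}A_{11}^{1/2}\\X^{-1/2}\end{pmatrix}$; the cross-terms and the constraint conspire so that the compressed form of $\widehat{M}-2I_{2r}$ collapses to the perfect square $(X^{-1}-I_r)^2$, and Courant--Fischer finishes. This buys you something real: you never touch a polar decomposition, never invoke the operator AM--GM $A+SA^{-1}S\ge 2S$, and never need the auxiliary lemma at all; the whole proof is one quadratic-form computation plus the trivial fact that a Hermitian square is positive semidefinite. The paper's route, on the other hand, isolates the auxiliary lemma as a freestanding result of independent interest (tying it back to Drury's ``mystified'' Proposition 2), which your streamlined version bypasses. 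Both are valid; yours is the more elementary and self-contained, the paper's the more structural.
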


 \section{The mystified part}
 In order to prove (\ref{reduction3}), Drury made the following key observations. 
 \begin{proposition}\label{p2}\cite[Proposition 2]{Dru12} Let $M$ and $N$ be $r\times r$ positive definite matrices. Then 
 \begin{eqnarray*}
 \lambda_r\begin{pmatrix}
M &(M\sharp N)^{-1}\\ (M\sharp N)^{-1}& N 
 \end{pmatrix}\ge 2.
 \end{eqnarray*} 	 \end{proposition}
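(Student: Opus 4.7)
My plan is to prove Proposition~\ref{p2} by the Courant--Fischer min-max characterization applied to the $2r\times 2r$ Hermitian matrix
\[
H:=\begin{pmatrix} M & S^{-1}\\ S^{-1} & N\end{pmatrix},\qquad S:=M\sharp N.
\]
Since $\lambda_r(H)=\max_{\dim\mathcal V=r}\min_{0\ne v\in\mathcal V}\langle v,Hv\rangle/\|v\|^2$, it is enough to exhibit a single $r$-dimensional subspace $\mathcal V\subseteq\mathbb{C}^{2r}$ on which the Rayleigh quotient is bounded below by $2$.

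The natural candidate is the diagonal subspace $\mathcal V:=\{(x,x):x\in\mathbb{C}^r\}$, which plainly has dimension $r$. A direct block computation gives
\[
\langle (x,x),H(x,x)\rangle = x^*\bigl(M+N+2S^{-1}\bigr)x \qquad\text{and}\qquad \|(x,x)\|^2=2\|x\|^2,
\]
so the whole proposition reduces to the single operator inequality
\[
M+N+2S^{-1}\ \ge\ 4\,I_r.
\]

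This last bound follows from two successive applications of AM--GM. First, the matrix AM--GM inequality (\ref{am-gm}) gives $M+N\ge 2S$, whence $M+N+2S^{-1}\ge 2(S+S^{-1})$. Second, since $S$ is positive definite, the identity $(S^{1/2}-S^{-1/2})^2\ge 0$ yields $S+S^{-1}\ge 2I_r$; combining the two inequalities delivers the required $M+N+2S^{-1}\ge 4I_r$.

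The only ``creative'' ingredient is guessing the diagonal subspace; once this is spotted, the explicit square-root formula for $M\sharp N$ never has to be unpacked, and the entire argument collapses to the matrix AM--GM inequality stated at the very start of the paper. I expect this telescoping to be exactly the simplification the author promises over the corresponding auxiliary step in \cite{Dru12}, with no real obstacle beyond the choice of test subspace.
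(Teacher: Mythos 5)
Your proof is correct. The paper itself does not re-prove Proposition~\ref{p2} (it cites Drury), but it does prove the slightly more general Lemma~\ref{lem1}, and your argument is exactly that proof specialized to $S=M\sharp N$: your Courant--Fischer step with the diagonal subspace $\{(x,x):x\in\mathbb{C}^r\}$ is the compression by the partial isometry $P$ in the paper, and the chain $M+N+2S^{-1}\ge 2(S+S^{-1})\ge 4I_r$ is precisely the paper's use of the AM--GM inequality (\ref{am-gm1}) followed by $|S|+|S|^{-1}\ge 2I_r$. The only step you bypass is the polar-decomposition reduction, which is superfluous here because $S=M\sharp N$ is already Hermitian positive definite, whereas Lemma~\ref{lem1} must handle an arbitrary nonsingular $S$.
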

 	\begin{proposition}\label{p3}\cite[Theorem 7]{Dru12} Let $L$ and $M$ be $r\times r$ positive definite matrices, and let $Z$ be an $r\times r$ matrix such that $ML(I+ZZ^*)LM=I_r$. Then 
 		\begin{eqnarray} \label{reduction4} \lambda_r\begin{pmatrix}
 				L+M & LZ\\ Z^*L&  Z^*LZ 
 			\end{pmatrix}\ge 2.
 		\end{eqnarray} 	 \end{proposition}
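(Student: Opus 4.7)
The plan is to exploit the elementary identity $\lambda_i(CC^*)=\lambda_i(C^*C)$ for a square $C$ and to engineer things so that $C^*C$, after a modest unitary cleanup, is an instance of the matrix appearing in Proposition~\ref{p2}. To find the right $C$, I split the target matrix as $H=\begin{pmatrix} L & LZ\\ Z^*L & Z^*LZ\end{pmatrix}+\begin{pmatrix} M & 0\\ 0 & 0\end{pmatrix}$, displaying it as a sum of two rank-$r$ positive semidefinite blocks; each summand is an outer product, and concatenating the resulting columns yields $H=CC^*$ with the $2r\times 2r$ matrix $C=\begin{pmatrix} L^{1/2} & M^{1/2}\\ Z^*L^{1/2} & 0\end{pmatrix}$.

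Next I would compute $C^*C$. The $(1,1)$ block works out to $L+L^{1/2}ZZ^*L^{1/2}=L^{1/2}(I+ZZ^*)L^{1/2}$, and the hypothesis $ML(I+ZZ^*)LM=I_r$ collapses this to $L^{-1/2}M^{-2}L^{-1/2}$; the $(2,2)$ block is $M$, and the off-diagonal block is $L^{1/2}M^{1/2}$. Setting $Q:=(L^{1/2}ML^{1/2})^{1/2}$ and $W:=M^{-1/2}L^{-1/2}Q$, a direct check shows that $W$ is unitary, that $L^{1/2}M^{1/2}W=Q$, and that $W^*MW=QL^{-1}Q$. Conjugating $C^*C$ by $\operatorname{diag}(I,W)$ therefore yields the Hermitian matrix $\begin{pmatrix} L^{-1/2}M^{-2}L^{-1/2} & Q\\ Q & QL^{-1}Q\end{pmatrix}$, which has the same spectrum as $H$.

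Finally I would recognize this as a Proposition~\ref{p2} matrix with $M_0:=L^{-1/2}M^{-2}L^{-1/2}$ and $N_0:=QL^{-1}Q$. By the defining property of the geometric mean, $M_0\sharp N_0=Q^{-1}$ is equivalent to $Q^{-1}M_0^{-1}Q^{-1}=N_0$, and this reduces to the easily checked identity $Q^2L^{-1}Q^2=L^{1/2}M^2L^{1/2}$ by way of $Q^2=L^{1/2}ML^{1/2}$. Proposition~\ref{p2} then hands us $\lambda_r\ge 2$ for the transformed matrix, which is the same as $\lambda_r(H)$, proving (\ref{reduction4}).

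The genuinely inventive step is spotting the factorization $H=CC^*$; once it is in hand, the $CC^*\leftrightarrow C^*C$ swap turns a matrix seemingly out of reach of Proposition~\ref{p2} into one that matches it after nothing more than a polar-type adjustment, and the hypothesis $ML(I+ZZ^*)LM=I_r$ is used precisely where it must be, to collapse the $(1,1)$ block of $C^*C$. Invertibility of the auxiliary matrices ($Q$, $L^{1/2}M^{1/2}$, and so on) is provided by Drury's standing assumption that $L$ and $M$ are positive definite, and nothing more is needed.
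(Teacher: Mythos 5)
Your proof is correct, and it is in essence the very factorization argument that the paper deploys — just pointed at Proposition~\ref{p3} directly rather than at Proposition~\ref{p1}. The paper in fact never proves Proposition~\ref{p3} (it only quotes Drury's characteristic-polynomial argument and then deliberately bypasses it); instead it proves Proposition~\ref{p1}, which is the same statement with $L=A_{11}$, $M=X$, $Z=A_{11}^{-1}A_{12}$ once the $A_{12}$ block is allowed to be rectangular. The paper's proof of Proposition~\ref{p1} is precisely your strategy: factor the target as $CC^*$ with $C=\bigl(\begin{smallmatrix}A_{11}^{1/2}&X^{1/2}\\A_{12}^*A_{11}^{-1/2}&0\end{smallmatrix}\bigr)$, pass to $C^*C$, use the constraint to turn the $(1,1)$ block into $A_{11}^{-1/2}X^{-2}A_{11}^{-1/2}$, and then invoke a version of Proposition~\ref{p2}. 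The only cosmetic divergence is in how the reduction to Proposition~\ref{p2} is packaged: you conjugate by $\operatorname{diag}(I,W)$ with $W=M^{-1/2}L^{-1/2}Q$ to make the off-diagonal block Hermitian (equal to $Q$) and then check $M_0\sharp N_0=Q^{-1}$ so Proposition~\ref{p2} applies verbatim; the paper instead states and proves Lemma~\ref{lem1}, a reformulation of Proposition~\ref{p2} with possibly non-Hermitian off-diagonal block $(S^{-1})^*$, and absorbs the symmetrization into the polar decomposition inside the proof of that lemma. These are the same unitary trick placed in two different locations. Your proposal does have the modest advantage of leaning only on Drury's Proposition~\ref{p2} as a black box and of handling the constraint cleanly via $Q^2=L^{1/2}ML^{1/2}$; the paper's Lemma~\ref{lem1} buys the ability to treat the non-square case ($n>2r$) in Proposition~\ref{p1} without any case analysis, which is the author's stated motivation for that detour.
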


 The way that Drury proved (\ref{reduction4}) is by showing that $T:=\begin{pmatrix}
 L+M & LZ\\ Z^*L&  Z^*LZ 
 \end{pmatrix}$ and $R:=\begin{pmatrix}
 M &(M\sharp N)^{-1}\\ (M\sharp N)^{-1}& N 
 \end{pmatrix}$ have the same characteristic polynomial, and so the eigenvalues of $R$ and $T$ coincide.  As explained in \cite{Dru12a}, this connection (between $R$ and $T$) is mystified. Formally, the  mystified part also comes from $R$ and $T$ themselves, indeed, $T$ is always positive semidefinite while  $R$ is not!
 
 In order to apply Proposition \ref{p3} to Proposition \ref{p1}, Drury discussed three possible relations between the size $n$ and $r$. Our proof of Proposition \ref{p1} in the next section allows us to skip this discussion on the size. 
 
   \section{Proof of Proposition \ref{p1}}
   The following lemma slightly generalizes Proposition \ref{p2} in form. 
   \begin{lemma} \label{lem1} Let $X$ be a $r\times r$ positive definite matrix and let $S$ be a  $r\times r$ nonsingular matrix. Then 
   	\begin{eqnarray*}
   		\lambda_r\begin{pmatrix}
   			SX^{-1}S^*& (S^{-1})^*\\S^{-1}& X
   		\end{pmatrix}\ge 2.
   	\end{eqnarray*}    	 
   \end{lemma}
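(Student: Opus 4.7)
The plan is to reduce Lemma~\ref{lem1} to Proposition~\ref{p2} via a unitary similarity. Although the off-diagonal block $(S^{-1})^{*}$ is not Hermitian in general, so that the given block matrix is not yet in the symmetric form appearing in Proposition~\ref{p2}, a polar decomposition of $S$ removes this obstruction while preserving eigenvalues.

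First, since $S$ is nonsingular, I would invoke the polar decomposition $S = U|S|$, where $U$ is unitary and $|S| := (S^{*}S)^{1/2}$ is positive definite. Setting $V := \mathrm{diag}(U, I_{r})$ and using the identities $U^{*}S = |S|$, $S^{*}U = |S|$, and $S^{-1}U = |S|^{-1}$, a direct block computation yields
$$V^{*}\begin{pmatrix} SX^{-1}S^{*} & (S^{-1})^{*} \\ S^{-1} & X \end{pmatrix}V = \begin{pmatrix} |S|X^{-1}|S| & |S|^{-1} \\ |S|^{-1} & X \end{pmatrix}.$$
Because $V$ is unitary, the eigenvalues are preserved, so it suffices to show that the $r$-th largest eigenvalue of the matrix on the right is at least $2$.

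Next, I would set $M := |S|X^{-1}|S|$ and $N := X$; both are $r \times r$ positive definite. Recall that $M \sharp N$ is characterized as the unique positive definite $Y$ satisfying $YM^{-1}Y = N$. The choice $Y = |S|$ gives $|S| \cdot |S|^{-1} X |S|^{-1} \cdot |S| = X = N$, so $M \sharp N = |S|$ and $(M \sharp N)^{-1} = |S|^{-1}$. The transformed matrix is therefore exactly $\begin{pmatrix} M & (M\sharp N)^{-1} \\ (M\sharp N)^{-1} & N \end{pmatrix}$, and Proposition~\ref{p2} delivers $\lambda_{r} \ge 2$.

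I do not anticipate any serious obstacle: the only insight required is that the polar decomposition is the correct symmetrizing device. It simultaneously converts the non-Hermitian off-diagonal block $(S^{-1})^{*}$ into the Hermitian positive definite block $|S|^{-1}$ and arranges the diagonal blocks into a pair $(M, N)$ whose geometric mean is precisely $|S|$. With that identification, the lemma reduces to Drury's mystified Proposition~\ref{p2} with no further work, and in particular avoids the case analysis on the relation between $n$ and $r$ used in \cite{Dru12}.
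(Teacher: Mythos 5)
Your opening move --- the polar decomposition $S=U|S|$ followed by conjugation with the block-diagonal unitary $V=\begin{pmatrix}U&0\\0&I_r\end{pmatrix}$, which turns the matrix into the Hermitian form $\begin{pmatrix}|S|X^{-1}|S|&|S|^{-1}\\|S|^{-1}&X\end{pmatrix}$ --- coincides exactly with the first step of the paper's proof, and your identification $M\sharp N=|S|$ for $M=|S|X^{-1}|S|$, $N=X$ is correct (indeed $|S|M^{-1}|S|=X=N$), so the reduction of Lemma~\ref{lem1} to Proposition~\ref{p2} is logically valid. Where the two arguments diverge, though, is precisely the point of the paper. You stop and appeal to Proposition~\ref{p2}, which is the very result whose original proof (matching characteristic polynomials of $T$ and $R$) the text singles out as mystified and sets out to replace; your argument therefore still rests on Drury's opaque computation and offers no simplification. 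The paper instead finishes elementarily and self-containedly: it compresses the Hermitian block matrix by the partial isometry built from $\begin{pmatrix}I_r\\I_r\end{pmatrix}$, uses $\lambda_r(\cdot)\ge\lambda_r(P^{*}\cdot P)$ to arrive at $\tfrac{X+|S|X^{-1}|S|}{2}+|S|^{-1}$, bounds this below by $|S|+|S|^{-1}$ via the AM--GM variant~(\ref{am-gm1}), and concludes from $\lambda_r(|S|+|S|^{-1})\ge 2$. Since putting $S=M\sharp N$ and $X=N$ in Lemma~\ref{lem1} recovers Proposition~\ref{p2}, the paper's route actually produces a transparent new proof of the mystified proposition --- the stated contribution of the article --- whereas your reduction runs in the opposite direction and leaves that proposition as a black box.
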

   \begin{proof}
   	 Consider the polar decomposition of $S$, $S=U|S|$, where $U$ is unitary and $|S|=(S^*S)^{\frac{1}{2}}$. The matrix $\begin{pmatrix}
   	 	SX^{-1}S^*& (S^{-1})^*\\S^{-1}& X
   	 \end{pmatrix}$ is unitarily similar to 
   	 $$\begin{pmatrix}U^*
   	 SX^{-1}S^*U& U^*(S^{-1})^*\\S^{-1}U& X
   	 \end{pmatrix}=\begin{pmatrix}
   	 |S|X^{-1}|S|& |S|^{-1}\\|S|^{-1}& X
   	 \end{pmatrix}.$$
   	 As $P:=\frac{1}{2}\begin{pmatrix}
   	 I_r\\ I_r
   	 \end{pmatrix}$ is a partial isometry, 
   	 
   	  	\begin{eqnarray*}
   	  		\lambda_r\begin{pmatrix}
   	  			SX^{-1}S^*& (S^{-1})^*\\S^{-1}& X
   	  		\end{pmatrix}&=&\lambda_r\begin{pmatrix}
   	  		|S|X^{-1}|S|& |S|^{-1}\\|S|^{-1}& X
   	  	\end{pmatrix}\\&\ge& \lambda_r \left(P^*\begin{pmatrix}
   	  	|S|X^{-1}|S|& |S|^{-1}\\|S|^{-1}& X
   	  \end{pmatrix}P\right)\\&=&\lambda_r\left(\frac{X+|S|X^{-1}|S|}{2}+|S|^{-1}\right)\\&\ge& \lambda_r(|S|+|S|^{-1})  \ge 2.  \qquad \hbox{by (\ref{am-gm1})}
   	  	\end{eqnarray*}  
   	  	The required result follows. 
   \end{proof}
   
   Now we are ready to give a simpler proof of  Proposition \ref{p1}. 
   
   ~
   
   \noindent
   {\it Proof. } Consider the factorization  $$\begin{pmatrix}
   A_{11}+X & A_{12}\\ A_{12}^*&  A_{12}^*A_{11}^{-1}A_{12}  
   \end{pmatrix}=\begin{pmatrix}
   A_{11}^{\frac{1}{2}}&X^{\frac{1}{2}}\\ A_{12}^*A_{11}^{-\frac{1}{2}}&  0
   \end{pmatrix}\begin{pmatrix}
   A_{11}^{\frac{1}{2}}&X^{\frac{1}{2}}\\ A_{12}^*A_{11}^{-\frac{1}{2}}&  0
   \end{pmatrix}^*.$$
   Clearly, $\begin{pmatrix}
   	A_{11}+X & A_{12}\\ A_{12}^*&  A_{12}^*A_{11}^{-1}A_{12}  
   \end{pmatrix}$ is unitarily similar to  \begin{eqnarray*}
   \begin{pmatrix}
   	A_{11}^{\frac{1}{2}}&X^{\frac{1}{2}}\\ A_{12}^*A_{11}^{-\frac{1}{2}}&  0
   \end{pmatrix}^*\begin{pmatrix}
   A_{11}^{\frac{1}{2}}&X^{\frac{1}{2}}\\ A_{12}^*A_{11}^{-\frac{1}{2}}&  0
\end{pmatrix}&=&\begin{pmatrix}
A_{11}+A_{11}^{-\frac{1}{2}}A_{12}A_{12}^*A_{11}^{-\frac{1}{2}}& A_{11}^{\frac{1}{2}}X^{\frac{1}{2}}\\ X_{11}^{\frac{1}{2}}A^{\frac{1}{2}}&  X
\end{pmatrix}\\&=&\begin{pmatrix}
 A_{11}^{-\frac{1}{2}}X^{-2}A_{11}^{-\frac{1}{2}}& A_{11}^{\frac{1}{2}}X^{\frac{1}{2}}\\ X_{11}^{\frac{1}{2}}A^{\frac{1}{2}}&  X
	\end{pmatrix}.
   \end{eqnarray*} 
   Now setting $S=A_{11}^{-\frac{1}{2}}X^{-\frac{1}{2}}$ in Lemma \ref{lem1}  yields the desired result. \qed
   
   \section{A conjecture}
 A weighted version of  (\ref{bk1}) is known. That is, if $A, B$ are $n\times n$ positive  semidefinite matrices, then   for any $t\in [0, 1]$ and   $j=1, \ldots, n$ 
 \begin{eqnarray}\label{ando} && \lambda_j((1-t)A+tB)\ge  \sqrt{\lambda_j(A^{2(1-t)}B^{2t})}=\sigma_j(A^{1-t}B^{t}).
 \end{eqnarray}   Inequality  (\ref{ando}) is due to Ando \cite{Ando95}. With \ref{ando}), it is not hard to present a weighted version of (\ref{bk2}).
 
   \begin{proposition}\label{p4}  If $A, B$ are $n\times n$ positive  semidefinite matrices, then   for any $t\in [0, 1]$ and   $j=1, \ldots, n$ 
   \begin{eqnarray}\label{lin} \lambda_j((1-t)A+tB)\ge    \lambda_j(A^{1-t}B^{t}). 
   		\end{eqnarray}
     \end{proposition}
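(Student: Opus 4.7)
My plan is to derive (\ref{lin}) by applying (\ref{ando}) to a clever substitute and then closing the remaining gap with the operator concavity of the square root.

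The first move is to apply (\ref{ando}) not to $A, B$ themselves---which would only produce the (weaker) singular-value bound $\sigma_j(A^{1-t}B^{t})$ on the right---but to the positive semidefinite matrices $A^{1/2}$ and $B^{1/2}$, still with parameter $t$. The exponents on the right-hand side of (\ref{ando}) then drop from $2(1-t), 2t$ to $1-t, t$, and the inequality reads
\[
\lambda_j\bigl((1-t)A^{1/2} + tB^{1/2}\bigr) \geq \sqrt{\lambda_j(A^{1-t}B^{t})}.
\]
This is already the key: feeding in the half-powers turns Ando's singular value into exactly the square root of $\lambda_j(A^{1-t}B^{t})$ (equivalently, $\sigma_j(A^{(1-t)/2}B^{t/2})^2 = \lambda_j(B^{t/2}A^{1-t}B^{t/2}) = \lambda_j(A^{1-t}B^{t})$ by cyclicity).

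The second move is to invoke the L\"owner--Heinz inequality (operator concavity of $x \mapsto x^{1/2}$), which gives the positive-semidefinite comparison
\[
((1-t)A + tB)^{1/2} \geq (1-t)A^{1/2} + tB^{1/2}.
\]
By Weyl monotonicity this upgrades to $\sqrt{\lambda_j((1-t)A+tB)} \geq \lambda_j((1-t)A^{1/2}+tB^{1/2})$. Chaining with the bound from the first move and squaring then yields (\ref{lin}). The only real obstacle, modest as it is, is spotting the correct substitute in (\ref{ando}): a direct application to $(A,B)$ leaves one with $\sigma_j(A^{1-t}B^{t})$, which in general is not comparable termwise to $\lambda_j(A^{1-t}B^{t})$; but feeding in the half-powers $A^{1/2}, B^{1/2}$ replaces this singular value by $\sqrt{\lambda_j(A^{1-t}B^{t})}$, and operator concavity of $x^{1/2}$ bridges the arithmetic mean of the half-powers back to the half-power of the arithmetic mean.
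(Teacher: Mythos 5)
Your proposal is correct and follows essentially the same route as the paper: the crucial idea in both is to apply Ando's inequality (\ref{ando}) to $A^{1/2}, B^{1/2}$, thereby turning the right-hand side into $\sqrt{\lambda_j(A^{1-t}B^t)}$, and then to pass from $(1-t)A^{1/2}+tB^{1/2}$ to $(1-t)A+tB$. The only cosmetic difference is in the bridging step --- you invoke operator concavity of $x\mapsto x^{1/2}$ (L\"owner--Heinz), whereas the paper uses operator convexity of $x\mapsto x^2$; in this setting the latter is the slightly more elementary of the two and implies the former after one application of operator monotonicity of the square root, but both close the gap equally well.
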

  \begin{proof} By (\ref{ando}) and the matrix convexity of the square function,
  	    \begin{eqnarray*} \lambda_j(A^{1-t}B^{t})&=&\sigma_j^2(A^{(1-t)/2}B^{t/2})\\&\le&\lambda_j((1-t)A^{1/2}+tB^{1/2})^2 \\ &\le& \lambda_j((1-t)A+tB). 
  	    \end{eqnarray*}
  \end{proof}
We conclude the paper with the following conjecture \begin{conj}
	  If $A, B$ are $n\times n$ positive definite semidefinite matrices, then for any $t\in [0, 1]$
 	\begin{eqnarray*}
    	\lambda_j((1-t)A+tB)\ge   \sqrt{\sigma_j(A^{2(1-t)}B^{2t})}, \qquad  j=1, \ldots, n.
 	\end{eqnarray*} 
 \end{conj}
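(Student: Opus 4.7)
The plan is to adapt Drury's two-step strategy from Section 2 to the weighted setting. Without loss of generality take $A, B$ positive definite, fix $r \in \{1, \ldots, n\}$, and normalize so that $\sigma_r(A^{2(1-t)} B^{2t}) = 1$. Since $\sigma_r(A^{2(1-t)} B^{2t})^2 = \lambda_r(A^{2(1-t)} B^{4t} A^{2(1-t)})$, this normalization is equivalent to $\lambda_r(A^{2(1-t)} B^{4t} A^{2(1-t)}) = 1$, and the goal becomes $\lambda_r((1-t)A + t B) \ge 1$.

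For the first step, spectrally decompose $A^{2(1-t)} B^{4t} A^{2(1-t)} = \sum_k \mu_k P_k$ with $\mu_k \ge 1$ for $k \le r$, and define a rank-$r$ matrix $B_1$ by $B_1^{4t} := A^{-2(1-t)}\bigl(\sum_{k=1}^r P_k\bigr) A^{-2(1-t)}$. Then $B^{4t} \ge B_1^{4t}$, and the L\"owner--Heinz inequality applied with exponent $1/(4t)$ yields $B \ge B_1$ provided $t \ge 1/4$. A symmetric reduction acting on $A$ (spectrally decomposing $B^{2t} A^{4(1-t)} B^{2t}$) works for $t \le 3/4$, so every $t \in [0,1]$ is covered by one or the other. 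By Weyl monotonicity it then suffices to prove $\lambda_r((1-t)A + t B_1) \ge 1$ in the range in which we reduced $B$.

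For the second step, partition $B_1$ and $A$ conformally with respect to the range--kernel decomposition of $B_1$, and seek a weighted analog of Lemma \ref{lem1}. The natural replacement for the partial isometry $P$ used there is $P_t := (\sqrt{1-t}\,I_r,\ \sqrt{t}\,I_r)^\top$, which satisfies $P_t^* \operatorname{diag}(M, N) P_t = (1-t) M + t N$; one would then combine this with an operator form of the weighted scalar AM--GM inequality $(1-t) a + t b \ge a^{1-t} b^{t}$ in order to bound the required $\lambda_r$ from below by $1$.

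The hard part is that for $t \ne 1/2$ the fractional power $A^{2(1-t)}$ does not decompose polynomially in the blocks of $A$, so the clean Schur-complement replacement $A \ge A_1$ used in Section 2 is no longer available, and the block matrix appearing after partitioning must be handled without that simplification. More fundamentally, Drury's mystified Proposition \ref{p2} relies on a characteristic-polynomial identity that appears specific to the equal-weight case, and there is no obvious candidate weighted geometric mean to play the role of $(M \sharp N)^{-1}$. A successful proof would therefore probably require either a new block-matrix identity linking a weighted analog of the matrix $R$ to a weighted analog of $T$, or an interpolation/averaging argument that reduces the conjecture to Drury's $t = 1/2$ theorem.
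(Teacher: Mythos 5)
This statement is a \emph{conjecture}, and the paper leaves it open: the author explicitly writes that ``the present method of proof does not seem to lead to a solution of this conjecture.'' Your submission is therefore not a proof --- and to your credit you do not claim that it is --- but an analysis of how far Drury's strategy extends and where it breaks down.

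The first-step reduction you outline is correct as far as it goes. The normalization $\sigma_r(A^{2(1-t)}B^{2t})=1$ is indeed equivalent to $\lambda_r(A^{2(1-t)}B^{4t}A^{2(1-t)})=1$, and defining $B_1$ by $B_1^{4t}=A^{-2(1-t)}\bigl(\sum_{k=1}^r P_k\bigr)A^{-2(1-t)}$ gives $B^{4t}\ge B_1^{4t}$; the L\"owner--Heinz theorem then yields $B\ge B_1$ provided $4t\ge 1$, and the mirror reduction via $B^{2t}A^{4(1-t)}B^{2t}$ covers $4(1-t)\ge 1$, so the two ranges together cover all of $[0,1]$. The constraint $\sigma_r(A^{2(1-t)}B_1^{2t})=1$ is preserved because $A^{2(1-t)}B_1^{4t}A^{2(1-t)}$ is by construction a rank-$r$ orthogonal projection, and Weyl monotonicity legitimately reduces the target to $\lambda_r((1-t)A+tB_1)\ge 1$.

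The genuine gap lies exactly where you place it, in the second step. Drury's chain hinges on the identity $X(A_{11}^2+A_{12}A_{12}^*)X=I_r$, which exploits the fact that $A^2$ is a polynomial in the blocks of $A$, and on the Schur-complement bound $A\ge A_1$. For $t\ne 1/2$ the constraint involves the top-left block of $A^{4(1-t)}$, which admits no such expression, so there is no weighted analog of Proposition~\ref{p1}, and Lemma~\ref{lem1} has no obvious $t$-deformation: the characteristic-polynomial coincidence behind Propositions~\ref{p2} and~\ref{p3} appears to be specific to the equal-weight case. Your candidate partial isometry $P_t$ with $P_t^*\operatorname{diag}(M,N)P_t=(1-t)M+tN$ and the weighted operator AM--GM are reasonable primitives, but without a substitute for the block identity they do not close the argument. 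Your diagnosis agrees with the paper's own assessment; filling the gap would require a genuinely new ingredient rather than a routine weighted modification of the $t=1/2$ proof.
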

    The present method of proof does not seem to lead to a solution of this conjecture.  
    
 \section*{Acknowledgement} The author thanks some helpful conversations with T. Ando and P. van den Driessche.


\begin{thebibliography}{11}
 \bibitem {Ando95}	T. Ando, Matrix Young inequalities, Oper. Theory Adv. Appl. 75 (1995) 33-38.
	 
	 \bibitem {Bha97} R. Bhatia, Matrix Analysis, GTM 169, Springer-Verlag, New York,	 1997.
	 
	 \bibitem {Bha07} R. Bhatia, Positive Definite Matrices, Princeton University Press, Princeton, 2007.
	 \bibitem {BK90} R. Bhatia, F. Kittaneh, On the singular values of a product of operators, SIAM J. Matrix Anal. Appl. 11 (1990) 272-277.
	 \bibitem {BK00}   R. Bhatia, F. Kittaneh, Notes on matrix arithmetic–geometric mean inequalities, Linear Algebra Appl. 308 (2000) 203-211.
  \bibitem {BK08}  R. Bhatia, F. Kittaneh, The matrix arithmetic–geometric mean inequality revisited, Linear Algebra Appl. 428 (2008) 2177-2191.
  \bibitem {Dru12}  S. W. Drury, On a question of Bhatia and Kittaneh, Linear Algebra Appl. 437 (2012) 1955-1960.
  \bibitem {Dru12a}   S. W. Drury, An operator Arithmetic-Geometric mean inequality, available at \href{http://www.math.mcgill.ca/drury/research/bhatiakittaneh/BKBlurb.html}{\url{http://www.math.mcgill.ca/drury/research/bhatiakittaneh/BKBlurb.html}} 
\end{thebibliography}
\end{document}